\def\frk{\frak}               
\def\mm{{\frk m}}
\def\Phi{{\frk n}}
\def\Phi{{\frk N}}
\def\Fc{{\mathcal F}}
\def\opn#1#2{\def#1{\operatorname{#2}}} 
\opn\chara{char} \opn\length{\ell} \opn\pd{pd} \opn\rk{rk}
\opn\projdim{proj\,dim} \opn\injdim{inj\,dim} \opn\rank{rank}
\opn\depth{depth} \opn\grade{grade} \opn\height{height}
\opn\embdim{emb\,dim} \opn\codim{codim}
\opn\Tr{Tr} \opn\bigrank{big\,rank}
\opn\superheight{superheight}\opn\lcm{lcm}
\opn\trdeg{tr\,deg}
\opn\reg{reg} \opn\lreg{lreg} \opn\ini{in} \opn\lpd{lpd}
\opn\size{size} \opn\sdepth{sdepth}
\opn\link{link}\opn\fdepth{fdepth}\opn\lex{lex}
\opn\div{div} \opn\Div{Div} \opn\cl{cl} \opn\Cl{Cl}
\opn\Spec{Spec} \opn\Supp{Supp} \opn\supp{supp} \opn\Sing{Sing}
\opn\Ass{Ass} \opn\Min{Min}\opn\Mon{Mon}
\opn\Ann{Ann} \opn\Rad{Rad} \opn\Soc{Soc}
\opn\Im{Im} \opn\Ker{Ker} \opn\Coker{Coker} \opn\Am{Am}
\opn\Hom{Hom} \opn\Tor{Tor} \opn\Ext{Ext} \opn\End{End}
\opn\Aut{Aut} \opn\id{id}
\opn\nat{nat}
\opn\pff{pf}
\opn\Pf{Pf} \opn\GL{GL} \opn\SL{SL} \opn\mod{mod} \opn\ord{ord}
\opn\Gin{Gin} \opn\Hilb{Hilb}\opn\sort{sort}
\opn\aff{aff} \opn\con{conv} \opn\relint{relint} \opn\st{st}
\opn\lk{lk} \opn\cn{cn} \opn\core{core} \opn\vol{vol}
\opn\link{link} \opn\star{star}\opn\lex{lex}\opn\set{set}
\opn\gr{gr}
\opn\dstab{dstab}
\def\pot#1#2{#1[\kern-0.28ex[#2]\kern-0.28ex]}
\opn\dirlim{\underrightarrow{\lim}}
\opn\inivlim{\underleftarrow{\lim}}
\let\Sect=\bigcap
\def\Implies{\ifmmode\Longrightarrow \else
        \unskip${}\Longrightarrow{}$\ignorespaces\fi}
\def\implies{\ifmmode\Rightarrow \else
        \unskip${}\Rightarrow{}$\ignorespaces\fi}
\def\iff{\ifmmode\Longleftrightarrow \else
        \unskip${}\Longleftrightarrow{}$\ignorespaces\fi}
\newtheorem{Theorem}{Theorem}[section]
\newtheorem{Corollary}[Theorem]{Corollary}
\newtheorem{Proposition}[Theorem]{Proposition}
\newtheorem{Examples}[Theorem]{Examples}
\let\epsilon\varepsilon
\let\kappa=\varkappa
\def\qed{\ifhmode\textqed\fi
      \ifmmode\ifinner\quad\qedsymbol\else\dispqed\fi\fi}
\def\textqed{\unskip\nobreak\penalty50
       \hskip2em\hbox{}\nobreak\hfil\qedsymbol
       \parfillskip=0pt \finalhyphendemerits=0}
\def\dispqed{\rlap{\qquad\qedsymbol}}
\opn\dis{dis}
\def\pnt{{\raise0.5mm\hbox{\large\bf.}}}
\opn\Lex{Lex}
\numberwithin{equation}{section}
\begin{document}

\title {Bounding the socles of powers of squarefree monomial ideals}

\author {J\"urgen Herzog and  Takayuki Hibi}

\address{J\"urgen Herzog, Fachbereich Mathematik, Universit\"at Duisburg-Essen, Campus Essen, 45117
Essen, Germany} \email{juergen.herzog@uni-essen.de}

\address{Takayuki Hibi, Department of Pure and Applied Mathematics, Graduate School of Information Science and Technology,
Osaka University, Toyonaka, Osaka 560-0043, Japan}
\email{hibi@math.sci.osaka-u.ac.jp}

\subjclass[2010]{13P20}
\keywords{depth, socle, squarefree monomial ideal, edge ideal.}

\begin{abstract}
Let $S=K[x_1,\ldots,x_n]$ be the polynomial ring in $n$ variables 
over a field $K$ and $I\subset S$ a squarefree monomial ideal.  
In the present paper we are interested in the monomials $u \in S$ 
belonging to the socle $\Soc(S/I^{k})$ of $S/I^{k}$, i.e., $u \not\in I^{k}$ and
$ux_{i} \in I^{k}$ for $1 \leq i \leq n$.  We prove that if
a monomial $x_1^{a_1}\cdots x_n^{a_n}$ belongs to $\Soc(S/I^{k})$, then
$a_i\leq k-1$ for all $1 \leq i \leq n$.
We then discuss squarefree monomial ideals $I \subset S$ for which
$x_{[n]}^{k-1} \in \Soc(S/I^{k})$, where
$x_{[n]} = x_{1}x_{2}\cdots x_{n}$.
Furthermore, we give a combinatorial 
characterization of finite graphs $G$ on $[n] = \{1, \ldots, n\}$
for which $\depth S/(I_{G})^{2}=0$, where $I_{G}$
is the edge ideal of $G$.
\end{abstract}
\maketitle
\section*{Introduction}
The depth of powers of an ideal (especially, a monomial ideal) of the polynomial ring 
has been studied by many authors. 
In the present paper, we are interested in the socle of powers of a squarefree monomial ideal.  

Let $K$ be a field, $S=K[x_1,\ldots,x_n]$ the polynomial ring in $n$ variables 
over $K$,
and $I\subset S$ a graded ideal. We denote by $\mm=(x_1,\ldots,x_n)$
the graded maximal ideal of $S$. An element $f+I\in  S/I$  
is called a {\em socle element} of $S/I$ if $x_if\in I$ for $i=1,\ldots,n$. 
Thus $f+I$
is a non-zero socle element of $S/I$ if $f\in I:\mm\setminus I$. 
The set of socle elements $\Soc(S/I)$ of $S/I$ is called the {\em socle} of $S/I$. 
Notice that $\Soc(S/I)$ is a $K$-vector space isomorphic to $(I:\mm)/I$.  
One has $\depth S/I = 0$ if and only if $\Soc(S/I) \neq \{ 0 \}$.

In the case that $I$ is a monomial ideal, a case which we mainly consider here, 
$\Soc(S/I)$ is generated by the residue classes of monomials.  
If  $u$ and $v$ are monomials not belonging to $I$,  
then $u+I=v+I$, if and only if $u=v$. Thus, if $u$ is a monomial,
it is convenient to write $u\in \Soc(S/I)$
and to call $u$ a socle element of $S/I$ if $u+I\in \Soc(S/I)$ and $u+I\neq 0$.
In other words, $u\in \Soc(S/I)$ if and only if 
$u \not\in I$ and $ux_{i} \in I$ for all $1 \leq i \leq n$.

The present paper is organized as follows.  First, in Section $1$, we show that,
for a squarefree monomial ideal $I \subset S$, 
if a monomial $x_1^{a_1}\cdots x_n^{a_n}$ is a socle element of $S/I^k$, then
$a_i\leq k-1$ for all $1 \leq i \leq n$ (Corollary \ref{powersocle}).
Second, in Section $2$, the edge ideal $I_{G}$
arising from a finite graph $G$ is discussed.  We give a combinatorial 
characterization of $G$ for which $\depth S/(I_{G})^{2}=0$ (Theorem \ref{easy}).

Let $I \subset S$ be a squarefree monomial ideal. 
Then Corollary \ref{powersocle} says that one has $x_{[n]}^{k-1} \in \Soc(S/I^{k})$
if and only if $x_{[n]}^{k-1} \not\in I^{k}$, where
$x_{[n]} = x_{1}x_{2}\cdots x_{n}$.
In Section $3$, we study
squarefree monomial ideals $I \subset S$ with $x_{[n]}^{k-1} \in \Soc(S/I^{k})$.
It is proved that, for a squarefree monomial ideal $I \subset S$ with $x_{[n]}^{k-1}\in \Soc(S/I^k)$,
one has $k < n$ and $\depth S/I^j>0$ for $j< k$ (Corollary \ref{smallern}).
Furthermore, 
for a squarefree monomial ideal $I \subset S$ generated in degree $d$
with $x_{[n]}^{k-1}\in \Soc(S/I^k)$,
we show that if $d> ((k-1)n+1)/k$, then $\depth S/I^k> 0$
and that if $d= ((k-1)n+1)/k$ and $\depth S/I^k=0$, then $x_{[n]}^{k-1}\in \Soc(S/I^k)$  
and $\depth S/I^\ell=0$ for all $\ell\geq k$ (Corollary \ref{however}).

\section{Socles of powers of squarefree monomial ideals}
\label{socle}

We begin with

\begin{Proposition}
\label{easy}
Let $I$ be a  monomial ideal. For $i=1,\ldots,n$  set $$c_i=\max\{\deg_{x_i}(u) : u\in G(I)\},$$ and
let $x_1^{a_1}\cdots x_n^{a_n}$ be a socle element of $S/I$. Then $a_i\leq c_i-1$ for $i=1,\ldots,n$.
\end{Proposition}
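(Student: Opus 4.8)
The plan is to argue by contradiction, exploiting that $ux_i\in I$ must be divisible by a minimal generator whose $x_i$-exponent is controlled by $c_i$. Write $u = x_1^{a_1}\cdots x_n^{a_n}$ and fix an index $i$. Since $u$ is a socle element of $S/I$, we have $ux_i\in I$, and hence there is a monomial $w\in G(I)$ with $w\mid ux_i$. Comparing exponents, this says $\deg_{x_j}(w)\leq a_j$ for every $j\neq i$, while $\deg_{x_i}(w)\leq a_i+1$.

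Next I would invoke the definition of $c_i$. Since $w\in G(I)$, we have $\deg_{x_i}(w)\leq c_i$. Suppose, for contradiction, that $a_i\geq c_i$. Then $\deg_{x_i}(w)\leq c_i\leq a_i$, and together with $\deg_{x_j}(w)\leq a_j$ for $j\neq i$ this gives $w\mid u$, so $u\in I$. This contradicts $u\notin I$, which holds because $u$ is a socle element. Hence $a_i\leq c_i-1$, and as $i$ is arbitrary the conclusion holds for all $i=1,\ldots,n$.

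This argument is short and I anticipate no essential difficulty. The only subtlety is the bookkeeping in the $x_i$-slot: the divisibility $w\mid ux_i$ by itself gives merely $\deg_{x_i}(w)\leq a_i+1$, which is too weak to conclude $w\mid u$. The contradiction is forced precisely by the extra inequality $\deg_{x_i}(w)\leq c_i$, valid because $w$ is one of the generators, combined with the hypothesis $a_i\geq c_i$ that pushes $a_i+1$ back down to at most $a_i$ in the exponent of $x_i$. One should take care to note that the bound on $\deg_{x_i}(w)$ uses only that $w$ lies in $G(I)$, so the maximum in the definition of $c_i$ is exactly what is needed.
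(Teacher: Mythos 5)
Your proposal is correct and follows exactly the paper's own argument: take a minimal generator dividing $ux_i$, bound its $x_i$-exponent by $c_i$ and its other exponents by those of $u$, and derive the contradiction $u\in I$ under the assumption $a_i\geq c_i$. No differences worth noting.
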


\begin{proof}
Let $u = x_1^{a_1}\cdots x_n^{a_n}$ be a socle element of $S/I$.
Thus $u \not\in I$ and $u \in I : \mm$.
Suppose that  $a_{i} \geq c_i$ for some $i$.   Since $x_{i} u \in I$, there exists
 $v\in G(I)$ which divides $x_{i} u$.

It follows that $\deg_{x_j}(v)\leq \deg_{x_j}(x_iu)=\deg_{x_j}(u)$ for $j\neq i$, and $\deg_{x_i}(v)\leq c_i\leq \deg_{x_i}(u)$. Therefore, $v$ divides $u$, and hence $u\in I$, a contradiction.
\end{proof}

\begin{Corollary}
\label{powersocle}
Let $I$ be a squarefree monomial ideal, and let $x_1^{a_1}\cdots x_n^{a_n}$ be a socle element of $S/I^k$. Then
\[
a_i\leq k-1 \quad\text{for} \quad  i=1,\ldots,n.
\]
\end{Corollary}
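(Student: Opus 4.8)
The plan is to reduce immediately to Proposition \ref{easy}, applied with $I^k$ in place of $I$. Writing $c_i=\max\{\deg_{x_i}(w) : w\in G(I^k)\}$ for the exponent invariants of the ideal $I^k$, Proposition \ref{easy} tells us that any socle element $x_1^{a_1}\cdots x_n^{a_n}$ of $S/I^k$ satisfies $a_i\leq c_i-1$ for each $i$. Thus it suffices to prove that $c_i\leq k$ for every $i$, since the desired bound $a_i\leq k-1$ then follows at once.

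First I would pin down the shape of the minimal generators of $I^k$. The key observation is that every minimal monomial generator $w$ of $I^k$ is in fact a product $u_1u_2\cdots u_k$ of $k$ (not necessarily distinct) elements of $G(I)$. Indeed, since $w\in I^k$, some product $u_1\cdots u_k$ with $u_j\in G(I)$ divides $w$; as this product itself already lies in $I^k$ and $w$ is a \emph{minimal} generator, divisibility forces the equality $w=u_1\cdots u_k$.

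Next I would invoke the hypothesis that $I$ is squarefree. Because each $u_j\in G(I)$ is a squarefree monomial, we have $\deg_{x_i}(u_j)\leq 1$ for every $i$, and hence for each generator $w=u_1\cdots u_k$ of $I^k$ one obtains $\deg_{x_i}(w)=\sum_{j=1}^k\deg_{x_i}(u_j)\leq k$. Taking the maximum over all $w\in G(I^k)$ yields $c_i\leq k$, which completes the argument.

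The proof is essentially routine once Proposition \ref{easy} is available; the only point requiring a moment's care is the description of $G(I^k)$ as products of $k$ squarefree generators, and in particular the minimality step ruling out that a minimal generator is a proper multiple of such a product. I do not expect any genuine obstacle beyond this bookkeeping.
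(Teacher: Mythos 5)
Your proof is correct and follows exactly the route the paper intends: the Corollary is stated without proof precisely because it is the immediate application of Proposition \ref{easy} to $I^k$, using the fact that every minimal generator of $I^k$ is a product of $k$ squarefree generators of $I$ and hence has degree at most $k$ in each variable. Your care in justifying that minimal generators of $I^k$ are exact products (not proper multiples) is a welcome, if routine, piece of bookkeeping.
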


\section{Edge ideals  whose square has depth zero}
\label{edge}

We consider the case of edge ideals.

\begin{Theorem}
\label{easy}
Let $I = I_{G}\subset S=K[x_1,\ldots,x_n]$ be the edge ideal of graph $G$ on the vertex set $[n]$.
The following conditions are equivalent:
\begin{enumerate}
\item[{\rm (a)}] $\depth S/I^2=0$;
\item[{\rm (b)}] $G$ is a connected graph  containing  a cycle $C$ of length $3$,
and any vertex of $G$ is a neighbor of $C$.
\end{enumerate}
Moreover, $x_{[n]}\in \Soc(S/I^2)$ if and only if $G$ is cycle of length $3$.
\end{Theorem}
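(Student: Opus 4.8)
The plan is to translate membership in $\Soc(S/I^2)$ into graph-theoretic conditions and then to identify the resulting configuration. First I would invoke Corollary \ref{powersocle} with $k=2$: every socle element of $S/I^2$ is a squarefree monomial $x_A=\prod_{i\in A}x_i$ for some $A\subseteq[n]$, so that $\depth S/I^2=0$ if and only if $x_A\in\Soc(S/I^2)$ for some $A$. Since $I^2$ is generated by the products $ef$ of two edges $e,f$ of $G$, and such a product is squarefree exactly when $e,f$ are disjoint, I record three translations. (i) $x_A\notin I^2$ iff no two disjoint edges of $G$ lie inside $A$. (ii) Assuming $x_A\notin I^2$, for $i\in A$ the monomial $x_Ax_i=x_i^2\prod_{j\in A\setminus\{i\}}x_j$ lies in $I^2$ iff a product of two edges divides it; as two disjoint edges inside $A$ are excluded, both edges must pass through the unique squared variable $x_i$, so $x_Ax_i\in I^2$ iff $i$ has at least two neighbors in $A$. (iii) For $i\notin A$ the monomial $x_Ax_i=x_{A\cup\{i\}}$ is squarefree, so $x_Ax_i\in I^2$ iff $A\cup\{i\}$ contains two disjoint edges.

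Next I would pin down the set $A$ of an arbitrary socle element. By (i) the edges of $G$ with both endpoints in $A$ pairwise intersect, and by (ii) every vertex of $A$ meets at least two of them. The key combinatorial input is the elementary fact that a graph in which any two edges share a vertex is either a star or a triangle: choosing edges $\{a,b\},\{a,c\}$ through a common vertex $a$, either every edge contains $a$ (a star), or some edge avoids $a$ and is thereby forced to be $\{b,c\}$, after which no further edge can meet all three. The minimum-degree condition from (ii) excludes the star (its leaves have only one neighbor in $A$) and excludes isolated vertices of $A$, leaving precisely a triangle $C$ on three vertices; thus $A=C$, with $x_C\notin I^2$ automatic. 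Having identified $A=C$, condition (iii) simplifies cleanly: for $i\notin C$, a neighbor $a\in C$ of $i$ together with the opposite edge of the triangle gives two disjoint edges, whereas if $i$ has no neighbor in $C$ the only edges in $C\cup\{i\}$ are those of the triangle. Hence (iii) holds for all $i\notin C$ exactly when every vertex of $G$ is a neighbor of $C$, which in turn forces $G$ to be connected. This proves (a)$\Rightarrow$(b); the converse is immediate, since if $G$ satisfies (b) then $x_C$ fulfils (i), (ii), (iii) by the same computations, so $x_C\in\Soc(S/I^2)$ and $\depth S/I^2=0$.

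Finally, for the \emph{Moreover} statement I would specialize $A=[n]$: by the analysis above $x_{[n]}\in\Soc(S/I^2)$ forces $[n]$ itself to be a triangle, i.e. $n=3$ and $G=K_3$, the cycle of length $3$; conversely for $G=C_3$ one checks directly that $x_1x_2x_3\notin I^2$ (it has degree $3$, while $I^2$ is generated in degree $4$) and that $x_1x_2x_3\cdot x_i$ is the product of the two triangle edges through $i$, hence in $I^2$. The main obstacle, and the step I would treat most carefully, is the structural dichotomy forcing $A$ to be a triangle — specifically verifying that the minimum-degree condition from (ii) genuinely rules out every star and every superfluous vertex of $A$, so that the socle support is exactly a $3$-cycle rather than some larger intersecting family of edges.
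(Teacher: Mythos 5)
Your proof is correct and reaches the theorem by the same overall strategy as the paper --- take a monomial socle element of $S/I^2$, show its support must induce a triangle $C$, and read off the condition that every other vertex is a neighbor of $C$ --- but the execution differs in two genuine ways. First, you invoke Corollary~\ref{powersocle} with $k=2$ to know from the outset that the socle element is squarefree; the paper instead works with a general monomial $u=x_1^{a_1}x_2^{a_2}x_3^{a_3}$ and eliminates the exponents by an ad hoc case analysis. Second, your structural input is the correct dichotomy for a pairwise-intersecting family of edges (star or triangle), killed off by your translation (ii) that every vertex of the support needs at least two neighbors inside the support. The paper instead asserts that the induced subgraph $H$, being connected with no two disjoint edges, ``must be either a cycle of length $3$, or a line of length at most $2$''; taken literally this omits the stars $K_{1,m}$ with $m\geq 3$, a case the paper's subsequent analysis (which only treats supports of size at most $3$) does not address, whereas your degree-two condition disposes of it at once, since a leaf of a star has only one neighbor in the support. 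So your route costs a little more setup (the three ``iff'' translations) but is self-contained precisely where the paper's case list is thin, and it yields the \emph{Moreover} statement as an immediate specialization $A=[n]$ rather than as a remark appended to the direct implication. The only point worth making explicit is the degenerate case $|A|\leq 1$, which your exclusion of isolated vertices of $A$ already covers.
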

\begin{proof}
(b) $\Rightarrow$ (a):
Suppose that $G$ has a cycle of length $3$, say,
$\{1, 2\}$, $\{1, 3\}$ and $\{2, 3\}$ are edges of $G$
and that, for each $4 \leq j \leq n$, one of $\{1, j\}$, $\{2, j\}$
and $\{3, j\}$ is an edge of $G$.  It then follows immediately that
the monomial $u = x_{1}x_{2}x_{3}$ satisfies
$u \not\in I^{2}$
and $u \in I^{2} : \mm$.
Hence $\depth S/I^{2} = 0$, as required. This argument also shows that $x_{[n]}\in \Soc(S/I^2)$ if and only if $G$ is cycle of length $3$

(a) $\Rightarrow$ (b):
Let $I = I_{G}$ be the edge ideal of a finite graph $G$
with $\depth S/I^{2}=0$.
Then there exists a monomial $u$ with $u \not\in I^{2}$
such that $u \in I^{2} : \mm$.
Let $H$ denote the induced subgraph of $G$
whose vertices are those $i \in [n]$ such that $x_{i}$ divides $u$.
Since $u \not\in I^{2}$ it follows that
$H$ cannot possess two disjoint edges.
If $H$ possesses an isolated vertex $i$, then $x_{i} u \not\in I^{2}$.
This contradict $u \in I^{2} : \mm$.
Hence $H$ is connected without disjoint edges.
Thus $H$ must be either a cycle of length $3$, or a line of length at most $2$.

First, if $H$ is a line of length $1$, i.e., $H$ is an edge of $G$,
then we may assume that $u = x_{1}^{a_{1}}x_{2}^{a_{2}}$ with each $a_{i} \geq 1$.
If each $a_{i} \geq 2$, then $u \in I^{2}$, a contradiction.  Let $a_{1} = 1$
and $u = x_{1}x_{2}^{a_{2}}$.  Then $u x_{2} \not\in I^{2}$.
This contradict $u \in I^{2} : \mm$.

Now,
let $H$ be either a cycle of length $3$, or a line of length $2$.
Thus we may assume that $u = x_{1}^{a_{1}}x_{2}^{a_{2}}x_{3}^{a_{3}}$
with each $a_{i} \geq 1$, where $\{1, 2\}$ and $\{1, 3\}$ are edges of $G$.
Since $u \not\in I^{2}$, it follows that $a_{1} = 1$.  Thus
$u = x_{1}x_{2}^{a_{2}}x_{3}^{a_{3}}$.  If $\{2, 3\}$ is not an edge of $G$,
then $x_{2} u \not\in I^{2}$, a contradiction.  Hence
$\{2, 3\}$ is an edge of $G$.  Then, since $u \not\in I^{2}$,
it follows that $a_{2} = a_{3} = 1$.  Thus $u = x_{1}x_{2}x_{3}$
and $\{1, 2\}$, $\{1, 3\}$ and $\{2, 3\}$ are edges of $G$.
Let $j \geq 4$.  Since $x_{j} u \in I^{2}$,
it follows that one of $\{1, j\}$, $\{2, j\}$ and $\{3, j\}$
must be an edge of $G$, as desired.
\end{proof}

This result has been shown independently by \cite{TT}.

\section{Powers of squarefree monomial ideals with maximal socle}
\label{maximal}

 Let $I\subset S=K[x_1,\ldots,x_n]$ be a squarefree monomial ideal. According to Corollary~\ref{powersocle},  $x_{[n]}^{k-1}$ is a socle element of $S/I^k$ if  $x_{[n]}^{k-1}\not\in I^k$. In that case it is a socle element of $S/I^k$ of maximal degree.  The next proposition characterizes those squarefree monomial ideals for which $x_{[n]}^{k-1}$ is indeed a socle element of $S/I^k$.

 We consider $I$ as the facet ideal of a simplicial complex $\Delta$. Thus $I=I(\Delta)$ where the set of facets $\Fc(\Delta)$ of $\Delta$ is given as
\[
\Fc(\Delta)=\{\supp(u)\:\; u\in G(I)\}.
\]
In other words, $G(I(\Delta))=\{x_F\:\; F\in \Fc(\Delta)\}$ where we set  $x_F=\prod_{i\in F}x_i$ for $F\subset [n]$.

\begin{Proposition}
\label{all}
Let $\Delta$ be a simplicial complex on the vertex set $[n]$, and $I=I(\Delta)\subset S=K[x_1,\ldots,x_n]$ its facet ideal.
\begin{enumerate}
\item[{\rm (a)}] The following conditions are equivalent:
\begin{enumerate}
\item[{\rm (i)}]  $x_{[n]}^{k-1}\not\in I^k$;
\item[{\rm (ii)}]  $\Sect_{i=1}^{k}F_i\neq \emptyset$ for all $F_1,\ldots,F_k\in \Fc(\Delta)$.
\end{enumerate}
\item[{\rm (b)}] Assuming that $x_{[n]}^{k-1}\not\in I^k$, the following conditions are equivalent:
\begin{enumerate}
\item[{\rm (i)}]  $x_jx_{[n]}^{k-1}\in I^k$ for all $j$;
\item[{\rm (ii)}] for each $j=1,\ldots,n$, there exist $F_1,\ldots,F_k\in \Fc(\Delta)$ such that $\Sect_{i=1}^{k}F_i=\{j\}$.
\end{enumerate}
\end{enumerate}
In particular,  $x_{[n]}^{k-1}\in \Soc(S/I^k)$ if and only if {\em (a)(ii)} and {\em (b)(ii)} hold.
\end{Proposition}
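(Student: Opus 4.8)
The plan is to prove both equivalences (a) and (b) by translating the membership condition $x_{[n]}^{k-1}\in I^k$ (and its negation) into combinatorial statements about intersections of facets. The key dictionary is this: a monomial $v$ lies in $I^k=I(\Delta)^k$ if and only if $v$ is divisible by a product $x_{F_1}x_{F_2}\cdots x_{F_k}$ for some (not necessarily distinct) facets $F_1,\ldots,F_k\in\Fc(\Delta)$. Writing $x_{F_1}\cdots x_{F_k}=\prod_i x_i^{b_i}$, where $b_i=|\{\,t\::\; i\in F_t\,\}|$ counts how many of the chosen facets contain $i$, divisibility of $v=\prod_i x_i^{a_i}$ by this product means exactly $b_i\le a_i$ for all $i$. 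Everything reduces to comparing these multiplicities $b_i$ against the given exponents.

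For part (a) I would apply this to $v=x_{[n]}^{k-1}$, so $a_i=k-1$ for every $i$. Thus $x_{[n]}^{k-1}\in I^k$ iff there exist facets $F_1,\ldots,F_k$ with $b_i\le k-1$ for all $i$, i.e.\ no vertex lies in all $k$ chosen facets, which says precisely $\Sect_{t=1}^k F_t=\emptyset$. Negating, $x_{[n]}^{k-1}\notin I^k$ iff \emph{every} choice of $k$ facets has nonempty common intersection, which is condition (ii). The one point needing care is that the $F_t$ are allowed to repeat; but if some vertex $i$ has $b_i=k$, the only way is $i\in F_t$ for all $t$, so the multiplicity bound $b_i\le k-1$ is genuinely equivalent to $\Sect_t F_t=\emptyset$, and this holds uniformly whether or not the facets are distinct.

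For part (b), under the standing assumption $x_{[n]}^{k-1}\notin I^k$, I would apply the same dictionary to $v=x_j x_{[n]}^{k-1}$, which has exponent $a_j=k$ in variable $x_j$ and $a_i=k-1$ for $i\neq j$. So $x_j x_{[n]}^{k-1}\in I^k$ iff there exist facets $F_1,\ldots,F_k$ with $b_j\le k$ (automatic) and $b_i\le k-1$ for $i\neq j$. Given the assumption, we already know $\Sect_t F_t\neq\emptyset$; the condition $b_i\le k-1$ for all $i\neq j$ forces the only possible element of $\Sect_t F_t$ to be $j$ itself, so the common intersection must be exactly $\{j\}$ (it cannot be empty by (a), and it cannot contain any $i\neq j$). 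Conversely, facets with $\Sect_t F_t=\{j\}$ give $b_i\le k-1$ for $i\neq j$ and hence the required divisibility. This yields the equivalence of (b)(i) and (b)(ii).

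Finally, the ``in particular'' assertion is immediate: by definition $x_{[n]}^{k-1}\in\Soc(S/I^k)$ means $x_{[n]}^{k-1}\notin I^k$ together with $x_j x_{[n]}^{k-1}\in I^k$ for all $j$, which is exactly (a)(i) combined with (b)(i), hence (a)(ii) and (b)(ii) by the two proven equivalences. I expect the main subtlety to be the careful handling of the multiplicity bookkeeping with repeated facets—ensuring that ``$b_i\le k-1$ for all $i$'' is correctly seen to be the negation of ``some vertex lies in all chosen facets''—rather than any deep difficulty; once the divisibility-to-multiplicity translation is set up cleanly, both equivalences follow directly.
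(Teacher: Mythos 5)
Your proposal is correct and takes essentially the same approach as the paper: both translate membership in $I^k$ into divisibility by a product $x_{F_1}\cdots x_{F_k}$ and observe that the exponent of $x_i$ in that product reaches $k$ exactly when $i$ lies in all $k$ chosen facets, which gives (a) directly and, combined with the standing assumption, pins the intersection down to $\{j\}$ in (b). Your explicit multiplicity bookkeeping with the $b_i$ is just a slightly more detailed writing of the paper's remark that ``no $x_i^k$ divides $x_{F_1}\cdots x_{F_k}$.''
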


\begin{proof}
(a) $x_{[n]}^{k-1}\in I^k$ if and only if there exist $F_1,\ldots, F_k\in \Fc(\Delta)$ such that $x_{F_1}x_{F_2}\cdots x_{F_k}$ divides  $x_{[n]}^{k-1}$. This is the case, if and only if no $x_i^k$ divides $x_{F_1}x_{F_2}\cdots x_{F_k}$. This  is equivalent to saying  that $\Sect_{i=1}^{k}F_i=\emptyset$. Thus the desired conclusion follows.

(b) $x_ix_{[n]}^{k-1}\in I^k$ if and only if   $x_{F_1}x_{F_2}\cdots x_{F_k}$ divides  divides  $x_jx_{[n]}^{k-1}$ for some $F_1,\ldots,F_k\in \Fc(\Delta)$. By (a), $\Sect_{i=1}^{k}F_i\neq \emptyset$. Therefore, $x_{F_1}x_{F_2}\cdots x_{F_k}$  divides  $x_jx_{[n]}^{k-1}$  if and only if $\Sect_{i=1}^{k}F_i=\{j\}$.
\end{proof}

\begin{Corollary}
\label{smallern}
Let $I\subset S=K[x_1,\ldots,x_n]$ be a squarefree monomial ideal.  Let $n>1$ and suppose that  $x_{[n]}^{k-1}\in \Soc(S/I^k)$.  Then $k < n$, and $\depth S/I^j>0$ for  $j< k$.
\end{Corollary}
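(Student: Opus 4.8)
The plan is to push everything through Proposition~\ref{all}. Writing $I=I(\Delta)$, the hypothesis $x_{[n]}^{k-1}\in\Soc(S/I^k)$ is exactly conditions (a)(ii) and (b)(ii): any $k$ facets $F_1,\dots,F_k\in\Fc(\Delta)$ satisfy $\Sect_{i=1}^kF_i\neq\emptyset$, and for each $j$ there are facets with $\Sect_{i=1}^kF_i=\{j\}$. I would introduce the quantity $k_0$, the least number of facets (repetitions being pointless) whose common intersection is empty. Condition (a)(ii) says precisely that $k_0\geq k+1$: any $m\le k$ facets with empty intersection could be padded with repetitions to $k$ such facets, contradicting (a)(ii). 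I would also record that, since $n>1$, no vertex lies in every facet, for if $v$ did then applying (b)(ii) to some $\ell\neq v$ would force $v\in\{\ell\}$; equivalently, for each vertex there is a facet avoiding it.

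For $k<n$ I would bound $k_0$ from above by a covering argument. For each vertex $v$ choose a facet $F_v$ with $v\notin F_v$; the at most $n$ facets $F_v$ have empty common intersection, so $k_0\leq n$. Combining with $k_0\geq k+1$ gives $k+1\le n$, hence $k<n$.

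For the depth statement, fix $j<k$ and suppose for contradiction that $\Soc(S/I^j)\neq 0$, with monomial socle element $u=x_1^{a_1}\cdots x_n^{a_n}$; Corollary~\ref{powersocle} gives $a_i\le j-1$. Set $w=u\,x_{[n]}^{k-j}$. Since every $x_F$ divides $x_{[n]}$ we have $x_{[n]}\in I$, so $x_{[n]}^{k-j}\in I^{k-j}$ and $x_\ell w=(x_\ell u)x_{[n]}^{k-j}\in I^jI^{k-j}=I^k$ for all $\ell$; moreover $\deg_{x_i}w=a_i+(k-j)\le k-1$, so $w$ divides $x_{[n]}^{k-1}$ and therefore $w\notin I^k$. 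Thus $w\in\Soc(S/I^k)$. Now comes the crux: if $w\neq x_{[n]}^{k-1}$, pick a variable $x_i$ dividing $t:=x_{[n]}^{k-1}/w$; then $x_{[n]}^{k-1}=(x_iw)(t/x_i)\in I^k$ because $x_iw\in I^k$, contradicting $x_{[n]}^{k-1}\notin I^k$. Hence $w=x_{[n]}^{k-1}$, forcing $a_i=j-1$ for all $i$, i.e.\ $u=x_{[n]}^{j-1}$.

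It then remains to exclude $u=x_{[n]}^{j-1}\in\Soc(S/I^j)$, i.e.\ $x_\ell x_{[n]}^{j-1}\in I^j$ for all $\ell$. Applying Proposition~\ref{all} with $j$ in place of $k$ (legitimate since any $j\le k$ facets meet, so $x_{[n]}^{j-1}\notin I^j$) this means that for each $\ell$ some $j$ facets satisfy $\Sect_{i=1}^jF_i=\{\ell\}$; adjoining a facet $F_0$ with $\ell\notin F_0$ produces $j+1$ facets with empty intersection, so $k_0\le j+1\le k$, contradicting $k_0\ge k+1$. Hence $\Soc(S/I^j)=0$ and $\depth S/I^j>0$. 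I expect the step collapsing $w$ to $x_{[n]}^{k-1}$ to be the main obstacle: it is exactly what rules out ``smaller'' socle elements and reduces the entire question to the sharp combinatorial inequality $k+1\le k_0\le n$ governed by $k_0$.
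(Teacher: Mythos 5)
Your proof is correct. For $k<n$ it is essentially the paper's argument: both of you produce, for each vertex $v$, a facet avoiding $v$ (the paper extracts it from condition (b)(ii) applied to a neighbouring vertex, you from the observation that no vertex lies in every facet), and conclude that some $n$ facets already have empty intersection while (a)(ii) forbids this for $k$ facets. For the depth claim the two arguments reach the same contradiction, namely $x_{[n]}^{k-1}\in I^k$, but by different routes. The paper never pins down the socle element $u$ of $S/I^j$: it only uses that $u$ divides $x_{[n]}^{j-1}$, so that $x_1x_{[n]}^{j-1}\in I^j$, picks one facet contained in $[n]\setminus\{1\}$ (available since $I$ is not principal), and writes the explicit factorizations $x_{[n]}^{j}=(x_{[n]}^{j-1}x_1)(x_2\cdots x_n)\in I^{j+1}$ and $x_{[n]}^{k-1}=(x_{[n]}^{j}x_1^{k-j-1})(x_2\cdots x_n)^{k-j-1}\in I^{k}$. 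You instead lift $u$ to $w=u\,x_{[n]}^{k-j}\in\Soc(S/I^k)$, invoke the (correct) maximality of socle elements under divisibility to force $w=x_{[n]}^{k-1}$ and hence $u=x_{[n]}^{j-1}$, and then derive the contradiction combinatorially through Proposition~\ref{all} and your covering number $k_0$: the $j$ facets meeting exactly in $\{\ell\}$ together with one facet avoiding $\ell$ give $j+1\leq k$ facets with empty intersection. This is the paper's factorization translated into the language of facet intersections, so the mathematical content coincides; your version is slightly longer but identifies the offending socle element exactly and treats $j=1$ uniformly, whereas the paper disposes of $j=1$ separately by noting that $S/I$ has positive depth because $I$ is squarefree.
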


\begin{proof}
The condition (b)(ii) of Proposition~\ref{all} guarantees the existence
of $F^{(j)} \in \Fc(\Delta)$ with $j \in F^{(j)}$ and $j+1 \not\in F^{(j)}$
for each $1 \leq j < n$ and the existence of
$F^{(n)} \in \Fc(\Delta)$ with $n \in F^{(n)}$ and $1 \not\in F^{(n)}$.
Then $\Sect_{j=1}^{n} F^{(j)} = \emptyset$.  Thus if $k \geq n$, then
the condition (a)(ii) of Proposition~\ref{all} is violated, and hence $k<n$.

Let $j<k$ and suppose that $\depth S/I^j=0$. Then $j\geq 2$, since $I$ is squarefree. Let $u\in \Soc(S/I^j)$; then $ux_i\in I^j$ for all $i$ and hence also $x_{[n]}^{j-1}x_i\in I^j$ for all $i$. Since $n>1$, the ideal $I$ cannot be a principal ideal, 
because otherwise $\depth S/I^j>0$ for all $j$. Hence there exist an integer $i$, say $i=1$, such that $x_2x_3\cdots x_n\in I$. Then
\[
x_{[n]}^j=(x_{[n]}^{j-1}x_1)(x_2x_3\cdots x_n)\in I^{j+1}.
\]
It follows that
\[
x_{[n]}^{k-1}=x_{[n]}^{j}x_{[n]}^{k-j-1}=(x_{[n]}^jx_1^{k-j-1})(x_2x_3\cdots x_n)^{k-j-1}\in I^k,
\]
a contradiction.
\end{proof}

\begin{Examples}
\label{fullsocle}
{\em
(a) The ideal $$I=(x_1x_2\cdots x_{n-1}, x_1x_n,x_2x_n, \ldots, x_{n-1}x_n)$$ in $S=K[x_1,\ldots,x_n]$ satisfies the conditions  (a)(ii) and  (b)(ii) of Proposition~\ref{all} for $k=2$. Hence $\depth(S/I^2)=0$.

(b) Let $n = 2d - 1$ and $I$ a monomial ideal of $S = K[x_1,\ldots,x_n]$
generated by squarefree monomials of degree $d$.
Then the condition (a)(ii)  in Proposition~\ref{all} is satisfied for $k=2$.
Thus if a squarefree monomial $w$ belongs to $\Soc(S/I^2)$, then
$w$ must be $x_{[n]}$.  Hence $\depth S/I^2=0$ if and only if $I$ satisfies for $k=2$
the condition (b)(ii)  in Proposition~\ref{all}.

For example, if $I$ is generated by the following  squarefree monomials
\begin{eqnarray*}
& & x_{1}x_{2} \cdots x_{d}, \, \, \, \, \, x_{1}x_{d+1} x_{d+2}
\cdots x_{2d-1},\\
& & x_{i} x_{d+1}x_{d+2} \cdots x_{2d-1} \quad \text{with} \quad 2
\leq i \leq d,\\
& & x_{2} x_{3} \cdots x_{d} x_{j} \quad \text{with} \quad d+1 \leq j \leq 2d-1,
\end{eqnarray*}
then $\depth S/I^2=0$.}
\end{Examples}

Example~\ref{fullsocle}(b) shows that for any odd integer $n>1$ there exists a squarefree monomial ideal $I\subset K[x_1,\ldots,x_n]$ generated in degree $d=(n+1)/2$ such that $\depth S/I^2=0$.

\medskip
On the other hand for a squarefree monomial ideal generated in degree $d>(n+1)/2$ one has $\depth S/I^2>0$, as follow from

\begin{Corollary}
\label{however}
Let $I\subset K[x_1,\ldots,x_n]$ be a squarefree monomial ideal generated in the single  degree $d$.
\begin{enumerate}
\item[{\rm (a)}] If  $d> ((k-1)n+1)/k$,  then $\depth S/I^k> 0$.

\item[{\rm (b)}] For all positive integer $d,k$ and $n$ such that $d=((k-1)n+1)/k$, there exists a squarefree monomial ideal  $I\subset K[x_1,\ldots,x_n]$ generated in degree $d$ such that $\depth S/I^k=0$.
    
\item[{\rm (c)}] If $d= ((k-1)n+1)/k$ and $\depth S/I^k=0$. Then $x_{[n]}^{k-1}\in \Soc(S/I^k)$   and $\depth S/I^\ell=0$ for all $\ell\geq k$. 
 \end{enumerate}
\end{Corollary}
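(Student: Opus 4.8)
The plan is to prove the three parts with just two ingredients: a degree count played against the exponent bound of Corollary~\ref{powersocle}, and one explicit construction. Throughout I use that $I$ is generated in degree $d$, so $I^\ell$ is generated in degree $\ell d$ and hence every monomial of $I^\ell$ has degree at least $\ell d$.

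For (a), suppose $u=x_1^{a_1}\cdots x_n^{a_n}$ is a socle element of $S/I^k$. Then $u\notin I^k$ while $x_1u\in I^k$, so $\deg u+1\geq kd$, i.e. $\deg u\geq kd-1$; on the other hand Corollary~\ref{powersocle} gives $\deg u=\sum_i a_i\leq (k-1)n$. Combining, $kd-1\leq (k-1)n$, that is $d\leq((k-1)n+1)/k$. Contrapositively, if $d>((k-1)n+1)/k$ then $S/I^k$ has no socle element, so $\Soc(S/I^k)=0$ and $\depth S/I^k>0$.

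For (c), assume $d=((k-1)n+1)/k$, equivalently $kd-1=(k-1)n$, and $\depth S/I^k=0$. The inequalities of (a) now collapse: any socle element $u$ of $S/I^k$ satisfies $(k-1)n\leq \deg u\leq (k-1)n$ with each exponent $\leq k-1$, and a sum of $n$ integers each at most $k-1$ equals $(k-1)n$ only if every exponent equals $k-1$; hence $u=x_{[n]}^{k-1}$, proving $x_{[n]}^{k-1}\in\Soc(S/I^k)$. For the propagation to $\ell\geq k$, fix any monomial $g\in I^{\ell-k}$ (a product of $\ell-k$ generators of $I$, with $g=1$ when $\ell=k$) and put $u=x_{[n]}^{k-1}g$. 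Since $x_ix_{[n]}^{k-1}\in I^k$ for every $i$, we get $x_iu=(x_ix_{[n]}^{k-1})g\in I^kI^{\ell-k}=I^\ell$. The decisive point is the degree: $\deg u=(k-1)n+(\ell-k)d=(kd-1)+(\ell-k)d=\ell d-1<\ell d$, so $u$ sits strictly below the least degree occurring in $I^\ell$, whence $u\notin I^\ell$. Therefore $u\in\Soc(S/I^\ell)$ and $\depth S/I^\ell=0$.

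For (b), take $I$ generated by \emph{all} squarefree monomials of degree $d=((k-1)n+1)/k$; thus $\Delta$ has as facets all $d$-subsets of $[n]$, and I verify the two conditions of Proposition~\ref{all}. Writing $c=n-d$, one has $kc=kn-kd=n-1<n$, so for any facets $F_1,\dots,F_k$ the union $\bigcup_i([n]\setminus F_i)$ has at most $kc<n$ elements, forcing $\Sect_i F_i\neq\emptyset$: this is (a)(ii). For (b)(ii), given $j$ partition the $n-1=kc$ elements of $[n]\setminus\{j\}$ into blocks $B_1,\dots,B_k$ of size $c$; each $F_i=[n]\setminus B_i$ is a $d$-subset containing $j$, hence a facet, and $\Sect_i F_i=[n]\setminus\bigcup_i B_i=\{j\}$. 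Proposition~\ref{all} then yields $x_{[n]}^{k-1}\in\Soc(S/I^k)$, so $\depth S/I^k=0$.

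Everything here is routine bookkeeping except for one point of care in (c), which I expect to be the only real obstacle: the natural guess of using $x_{[n]}^{\ell-1}$ as the socle witness for $\ell>k$ can fail, since $x_{[n]}^{\ell-1}$ may already belong to $I^\ell$ (for the triangle edge ideal one has $x_{[n]}^2\in I^3$). The fix is precisely to multiply the \emph{known} socle generator $x_{[n]}^{k-1}$ by an arbitrary element of $I^{\ell-k}$ and to read off non-membership from the exact degree identity $\deg(x_{[n]}^{k-1}g)=\ell d-1<\ell d$.
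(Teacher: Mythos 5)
Your proof is correct, and in parts (a) and (b) it takes a genuinely different route from the paper. For (a) the paper argues combinatorially: it shows by induction that any intersection of $k$ subsets of $[n]$ of cardinality $d>((k-1)n+1)/k$ has more than one element, so condition (a)(ii) of Proposition~\ref{all} holds while (b)(ii) fails, and then derives a contradiction from $\depth S/I^k=0$ via Corollary~\ref{powersocle}. You instead run a pure degree count ($kd-1\leq \deg u\leq (k-1)n$ for any socle element $u$), which is the very computation the paper only deploys later in its proof of (c); noticing that it already yields (a) makes your argument shorter and more elementary. For (b) the paper uses the same ideal (all squarefree monomials of degree $d$) but concludes by citing the depth formula $\depth S/I^k=\max\{0,n-k(n-d)-1\}$ from \cite{HH}, whereas you verify conditions (a)(ii) and (b)(ii) of Proposition~\ref{all} directly via the identity $k(n-d)=n-1$ and an explicit partition of $[n]\setminus\{j\}$ into $k$ blocks of size $n-d$; this keeps the proof self-contained and as a bonus exhibits $x_{[n]}^{k-1}$ as an explicit socle element, which the citation route does not. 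Your part (c) coincides with the paper's argument (and your closing remark correctly identifies why one must multiply $x_{[n]}^{k-1}$ by an element of $I^{\ell-k}$ rather than use $x_{[n]}^{\ell-1}$; the paper does exactly this, modulo two harmless typos in its displayed degree computation).
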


\begin{proof} (a) Let $F_1,\ldots,F_k$ subset of $[n]$ of cardinality $d$. We first show by induction on $i$, that $|\Sect_{j=1}^iF_j|> ((k-i)n+i)/k$. The assertion is trivial for $i=1$. By using the induction hypothesis, we see that
\[
|\Sect_{j=1}^iF_j|\geq |\Sect_{j=1}^{i-1}F_j|+|F_i|-n> \frac{(k-i+1)n+(i-1)}{k}+\frac{(k-1)n+1)}{k}-n=\frac{(k-i)n+i}{k},
\]
as desired.

It follows that  any intersection of $k$ subsets of $[n]$ of cardinality $d$ admits more than one elements. Therefore  $I$ satisfies condition (a)(ii) of Proposition~\ref{all}, but violates condition (b)(ii).

Since condition (a)(ii) is satisfied, it follows from Proposition~\ref{all} that $x_{[n]}^{k-1}\not\in I^k$. Thus, if we assume that $\depth S/I^k=0$, Corollary~\ref{powersocle} implies that $x_{[n]}^{k-1}\in \Soc(S/I^k)$. However, since condition (b)(ii) is violated, this is not possible.

(b) Suppose that $d=((k-1)n+1)/k$. Then there exists an integer $r\geq 0$ such that $d=(r+1)k-r$ and $n=(r+1)k+1$. Consider the monomial ideal  $I$ generated by all squarefree monomials of degree $d$ in $K[x_1,\ldots,x_n]$. By   \cite[Corollary 3.4]{HH} one has
$$\depth S/I^k=\max\{0,n-k(n-d)-1\}.$$
Since $n-k(n-d)-1=(r+1)k+1-k(r-1)-1=0$, the assertion follows.

(c) Let $u\in \Soc(S/I^k)$,  $u=x_1^{a_1}x_2^{a_2}\cdots x_n^{a_n}$. Then, by Corollary~\ref{powersocle},  $a_i\leq k - 1$ for all $i$, and hence $\deg u\leq (k-1)n=kd-1$. On the other hand, since $ux_i\in I^k$, it follows that $\deg u+1\geq kd$. Thus we conclude that $\deg u=kd-1=(k-1)n$, which is only possible if $u=x_{[n]}^{k-1}$. Let $\ell >k$ and let $v$ be a generator of $I^{\ell-k}$.  Then $uvx_i\in I^{\ell+1}$, but $uv\not\in I^{\ell}$, 
because 
\[
\deg uv=(kd-1)+(\ell - k) \leq kd - 1 + (\ell - k)d = \ell d - 1 <
\ell d.
\] 
This shows that $uv\in \Soc(S/I^\ell)$, and consequently $\depth S/I^{\ell}=0$, as required.
\end{proof}

\begin{Examples}
\label{allk}
{\em 
Let $k\geq 2$, and assume that $d=((k-1)n+1)/k$. Then $n=(kd-1)/(k-1)$, and this is an integer if and only if $d\equiv 1\mod(k-1)$. One solution is $d=k$. Then $n=k+1$. With these data we may choose the ideal $I\subset S=K[x_1,\ldots,x_n]$ generated  by all squarefree monomials of degree $d=k=n-1$. Then obviously $I$ satisfies the conditions (a)(i) and (b)(i) of Proposition~\ref{all}.  Thus $x_{[n]}^{k-1}\in \Soc(S/I^k)$. In particular, $\depth S/I^k=0$. It is shown in \cite{HH} that $\depth S/I^j>0$ for $j<k$. (This also follows from Corollary~\ref{smallern}). This example shows that arbitrary high powers of a squarefree monomial ideal may have a maximal socle.
}
\end{Examples}

It is known by a result of Brodmann \cite{B} (see also \cite{HH})  that the depth function $f(k)=\depth S/I^k$ is eventually constant. In \cite{HRV} the smallest number $k$ for which $\depth S/I^k= \depth S/I^j$ for all $j\geq k$, is denoted by $\dstab(I)$. In \cite{HQ} it is conjectured that $\dstab(I)<n$ for all graded ideals in $K[x_1,\ldots,x_n]$. Corollary~\ref{smallern} together with Corollary~\ref{however}(c) show that this conjecture holds true for a squarefree monomial ideal $I\subset K[x_1,\ldots,x_n]$ generated in degree $d=((k-1)n+1)/k$ for which  $\depth S/I^k=0$.


\end{document}